\newtheorem{theorem}{Theorem}[section]
\newtheorem{proposition}[theorem]{Proposition}
\newtheorem*{conjecture*}{Conjecture}
\theoremstyle{definition}
\theoremstyle{remark}
\newtheorem{remark}[theorem]{Remark}
\newcommand{\chooseversion}[2]
{#1}
\newcommand{\NN}{\mathbb{N}}
\newcommand{\ZZ}{\mathbb{Z}}
\newcommand{\K}[3]{g_{#1, #2, #3}}                
\newcommand{\SSK}[3]{\overline{\overline{g}}_{#1, #2, #3}}   
\newcommand{\ep}{\emptyset} 			
\newcommand{\cut}[1]{\overline{#1}}             
\newcommand{\cutt}[1]{\widehat{#1}}             
\newcommand{\SetL}{\mathcal{L}}                 
\newcommand{\Dom}{\mathcal{D}}                  
\newcommand{\semigroup}{\mathcal{S}}            
\newcommand{\polcol}{Q^-}     			
\newcommand{\polrow}{Q^+}     			
\newcommand{\op}[2]{\oplus (#1|#2)}  		
\newcommand{\scalar}[2]{\left\langle  #1 \, \middle| \, #2  \right\rangle} 
\author[Briand]{Emmanuel Briand}
\address{Departamento de Matem\'atica Aplicada I, Escuela T\'ecnica Superior de Ingenier\'ia Inform\'atica,
Avda.\ Reina Mercedes, S/N,
41012 Sevilla, Espa\~na}
\thanks{E. Briand and M.Rosas are partially supported by projects MTM2013--40455--P, FQM--333, P12--FQM--2696 and FEDER} 
\author[A. Rattan]{Amarpreet Rattan}
\address{Department of Economics, Mathematics and Statistics, Birkbeck,
University of London, London, UK, WC1E 7HX}
\author[M. Rosas]{Mercedes Rosas}
\address{Departamento de \'Algebra, Facultad de Matem\'aticas, Universidad de Sevilla, Avda.\ Reina Mercedes, Sevilla, Espa\~na}
\title[Kronecker coefficients]{On the growth of the Kronecker coefficients:
Accompanying Appendices.}
\date{\today}
\begin{document}

\maketitle
\appendix

This text is an appendix to our work "On the growth of Kronecker coefficients" \cite{LongVersion}.  Here, we provide some complementary theorems, remarks, and calculations that for the sake of space are not going to appear into the final version of our paper. 

We follow the same terminology and notation. External references to numbered equations, theorems, etc. are pointers to \cite{LongVersion}.  This file is not meant to be read independently of the main text.

\section{Bounds}\label{appbounds}

We prove here the assertions made in Remarks 5.3 
and 6.2 
about the values of the constants $k_i$ (in Theorem 5.2 
) and $k'_i$ (in Theorem 6.1
). These technical and less central results do not appear in the printed version of this work.

\subsection{Hook stability, reduced Kronecker coefficients}

In this section, we  find explicitly bounds for the quantities  $k_1$, $k_2$, $k_3$ appearing  in Theorem 5.2 

\begin{theorem} \label{bounds-columns}
In Theorem 5.2 
, one can take 
\[
\begin{array}{rcl}
k_1 &=& |\alpha|+\alpha_1+\beta'_1+\gamma'_1, \\
k_2 &=& |\beta|+\beta_1+\alpha'_1+\gamma'_1,   \\
k_3 &=& |\gamma|+\gamma_1+\alpha'_1+\beta'_1.
\end{array}
\]
\end{theorem}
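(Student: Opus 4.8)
The plan is to revisit the proof of Theorem 5.2 and turn its ``for sufficiently long first columns'' hypothesis into an explicit threshold. Recall the shape of that argument: writing $\lambda,\mu,\nu$ for the partitions obtained from $\alpha,\beta,\gamma$ by adjoining long enough first columns (equivalently, by applying the column operators $\Phi^-$), one expresses $g_{\lambda,\mu,\nu}$ as a scalar product of $s_\mu\ast s_\nu$ with a fixed finite $\ZZ$-linear combination of Schur functions $s_\rho$, the partitions $\rho$ being assembled from $\alpha$ together with the adjoined column; one then checks that, as soon as the adjoined column is long enough to contain every $\rho$ that can occur there \emph{and} every partition occurring in $s_\mu\ast s_\nu$, the scalar product stabilizes to $\overline{g}_{\alpha,\beta,\gamma}$. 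Thus the task is to exhibit an explicit length beyond which no new shape can intervene; that length will be $k_1$.

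For this I will only use the elementary support bounds. If $s_\rho$ occurs in a product $s_{\sigma^{(1)}}\cdots s_{\sigma^{(r)}}$ of Schur functions, then $|\rho|=\sum_i|\sigma^{(i)}|$, while $\rho_1\le\sum_i\sigma^{(i)}_1$ and $\rho'_1\le\sum_i(\sigma^{(i)})'_1$; and $s_\mu\ast s_\nu$ involves only partitions $\rho$ with $|\rho|=|\mu|=|\nu|$ whose number of parts and whose first part are controlled by those of $\mu$ and $\nu$. Feeding the partitions that appear in the proof of Theorem 5.2 into these inequalities, one is led to the following accounting: the first column adjoined in the $\alpha$-slot need only be as long as the combined ``horizontal size'' of $\alpha$ — namely $|\alpha|+\alpha_1$, the number of boxes of $\alpha$ plus its width — together with the ``vertical'' contributions $\beta'_1$ and $\gamma'_1$, which enter through the Littlewood--Richardson and Kronecker steps as the first columns of $\beta$ and $\gamma$. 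Summing these, one gets exactly $k_1=|\alpha|+\alpha_1+\beta'_1+\gamma'_1$, and $k_2$, $k_3$ follow from the cyclic symmetry of the construction, that is, by letting $\beta$ or $\gamma$ play the distinguished role.

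The main obstacle is precisely this lack of symmetry among $\alpha,\beta,\gamma$. In the proof of Theorem 5.2 one of the three partitions is the one whose Schur function gets expanded and the other two are paired against it, so one must carefully keep track of which partition contributes a horizontal statistic ($|\cdot|$ and first part) and which contributes a vertical one (first column), and then rerun the count in each of the three cyclic configurations. A secondary, purely technical, point is sharpness: to obtain precisely these $k_i$ rather than something larger one must use the fine width/height support bounds above rather than the crude estimate $\rho_1\le|\rho|$, and one must confirm that the stabilization is monotone, so that no shape absent for shorter columns can reappear once the adjoined column has reached length $k_i$. This last verification is already contained in the proof of Theorem 5.2 and carries over without change.
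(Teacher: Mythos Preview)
Your proposal is a plan, not a proof, and it diverges from the paper's argument in a way that leaves the central step unverified.

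The paper does \emph{not} re-run the stabilization of $g_{\lambda,\mu,\nu}=\langle s_\lambda\mid s_\mu\ast s_\nu\rangle$ with explicit support bounds. It recalls from the proof of Theorem~5.2 that one may take $k_i=\max_{\omega\in\Omega}\ell_i(\omega)$, where $\Omega$ is the support of a Laurent polynomial $Q^-_{\alpha,\beta,\gamma}(x,y,z)$ arising there. After the substitution $x=vw/u$, $y=uw/v$, $z=uv/w$ (so that $x^ay^bz^c=u^{\ell_1}v^{\ell_2}w^{\ell_3}$), the task becomes: compute the $u$-degree of $Q^-$. The paper isolates the positive-$u$ part of $H(-\varepsilon x,-\varepsilon y,-\varepsilon z)$, namely $u^2X+\varepsilon u X H_1-\varepsilon u(\tfrac{v}{w}Y+\tfrac{w}{v}Z)$, factorizes and expands $\sigma[\,\cdot\,]$, and pairs against $s_\alpha[X]s_\beta[Y]s_\gamma[Z]$. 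A typical summand carries $u^{2i+j+k+\ell}$ with the constraints $i\le\alpha_1$ (from $h_i^\perp s_\alpha$), $j\le|\alpha|-i$ (from the $X$-degree), $k\le\beta'_1$ and $\ell\le\gamma'_1$ (from $e_k^\perp s_\beta$, $e_\ell^\perp s_\gamma$). Summing yields $|\alpha|+\alpha_1+\beta'_1+\gamma'_1$, and symmetry gives $k_2,k_3$.

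Your route instead invokes generic width/height bounds on Schur products and on $s_\mu\ast s_\nu$, and then asserts that ``feeding the partitions \dots\ into these inequalities'' produces exactly $|\alpha|+\alpha_1+\beta'_1+\gamma'_1$. Two concrete problems: first, you never do the accounting, so there is nothing to check; second, the bounds you cite for the Kronecker product are multiplicative ($\ell(\rho)\le\ell(\mu)\ell(\nu)$, $\rho_1\le\mu_1\nu_1$), not additive, so without an additional idea they do not give a linear threshold in the adjoined column lengths. You also leave unspecified what the ``fixed finite $\ZZ$-linear combination of Schur functions $s_\rho$'' is---since $g_{\lambda,\mu,\nu}$ is already a scalar product against the single function $s_\lambda$, the reader cannot reconstruct which expansion you mean or why its support is governed by the statistics you name.
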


\begin{proof}
From the proof of Theorem 5.2 
, one can take $k_i = \max_{\omega \in \Omega} \ell_i(\omega)$, where $\Omega$ is the support of $\polcol=\polcol_{\alpha,\beta,\gamma}(x,y,z)$.

Let us perform the change of variables $x=\frac{vw}{u}$, $y=\frac{uw}{v}$, $z=\frac{uv}{w}$, so that the identity $x^a y^b z^c = u^{\ell_1} v^{\ell_2} w^{\ell_3}$ holds. Then $k_1$ (resp. $k_2$, $k_3$), as defined above, is the degree of $P$ with respect to the variable $u$ (resp. $v$, $w$). 

After this change of variables, $H(-\varepsilon x,-\varepsilon y,-\varepsilon z)$ equals
\begin{multline*}
XYZ+XY+XZ+YZ
-\varepsilon \cdot\left( \frac{uv}{w} XY +\frac{uw}{v} XZ +\frac{vw}{u} YZ  \right)\\
+ u^2 X + v^2 Y + w^2 Z
+ \varepsilon \frac{1}{vw}\left( u^2-u v^2 - u w^2\right) X\\
+ \varepsilon \frac{1}{uw}\left( v^2-v u^2 - v w^2\right) Y
+ \varepsilon \frac{1}{uv}\left( w^2-w u^2 - w v^2\right) Z.
\end{multline*}
We reorder the terms as follows:
\[
H(-\varepsilon x,-\varepsilon y,-\varepsilon z)=u^2 X 
+ \varepsilon u X H_1 
-\varepsilon u \left(\frac{v}{w} Y+\frac{w}{v} Z \right) + H_0,
\]
where $H_0$ is a sum of monomials with non-positive degree in $u$, and $H_1$ is free of $u$ and $X$. We now factorize $\sigma[H(-\varepsilon x,-\varepsilon y,-\varepsilon z)]$ as
\[
\sigma[u^2 X] \cdot \sigma[\varepsilon u X H_1] \cdot \sigma\left[-\varepsilon u \frac{v}{w} Y \right]
\cdot \sigma\left[-\varepsilon u \frac{w}{v} Z \right] \cdot \sigma[H_0]
\]
 and expand each series, except $\sigma[H_0]$. We get that $\sigma[H(-\varepsilon x,-\varepsilon y,-\varepsilon z)]$ is equal to  
\[
\sum u^{2i} h_i[X]\, u^j e_j[X H_1]\, \left(\frac{v}{w}u\right)^k e_k[Y]\, \left(\frac{w}{v}u\right)^{\ell} e_\ell[Z] \sigma[H_0],
\]
where the sum ranges over all nonnegative integers $i$, $j$, $k$, $\ell$. Therefore,
\begin{multline*}
\polcol=\scalar{\sigma[H(-\varepsilon x,-\varepsilon y,-\varepsilon z)]}{s_{\alpha}[X] s_{\beta}[Y] s_{\gamma}[Z]}\\
= \sum u^{2i+j+k+\ell} v^{k-\ell} w^{\ell-k} \scalar{e_j[X H_1] \sigma[H_0]}{(h_i^{\perp} s_{\alpha})[X] (e_k^{\perp} s_{\beta})[Y] (e_{\ell}^{\perp} s_{\gamma}[Z])}.
\end{multline*}
We have $h_i^{\perp} s_{\alpha}=0$ unless $i \leq \alpha_1$, and that  $e_k^{\perp} s_{\beta}=0$ (resp. $e_\ell^{\perp} s_{\gamma}=0$) unless $j \leq \beta'_1$ (resp. $k \leq \gamma'_1$). Finally, since $e_j[X H_1]$ is homogeneous of degree $j$ in $X$ and $h_i^{\perp} s_{\alpha}$ has degree $|\alpha|-i$, the summand corresponding to $i$, $j$, $k$, $\ell$ can be non zero only if $i \leq \alpha_1$, $j \leq |\alpha|-i$, $k \leq \beta'_1$ and $\ell \leq \gamma'_1$. Therefore $2i+j+k+\ell \leq |\alpha|+\alpha_1+\beta'_1+\gamma'_1$.

This proves that in Theorem 5.2 
one can take $k_1=|\alpha|+\alpha_1+\beta'_1+\gamma'_1$. By symmetry, it follows that one can also take $k_2=|\beta|+\beta_1+\alpha'_1+\gamma'_1$ and $k_3=|\gamma|+\gamma_1+\alpha'_1+\beta'_1$.
\end{proof}

\begin{remark}
More detailed computations show that the coefficient of $u^{|\alpha|+\alpha_1+\beta'_1+\gamma'_1}$ in $\polcol_{\alpha,\beta,\gamma}$ is
\[
s_{{\overline{\alpha}}'}\left[ \frac{1+v^2+w^2}{vw}\right] \cdot s_{\beta'}[1] s_{\gamma'}[1] 
\]
where $\overline{\alpha}$ is the partition obtained from $\alpha$ by removing its first row (and ${\overline{\alpha}}'$ is the conjugate of $\overline{\alpha}$). This is non--zero if and only if $\beta$ and $\gamma$ have at most one column, and $\overline{\alpha}$ has at most three columns. This is the only case when the bound is reached. 
\end{remark}

\subsection{First row for reduced Kronecker coefficients.}  

 We give bounds for the constants $k'_1, k'_2$ and $k'_3$ appearing in Theorem 6.1. 

\begin{theorem} \label{effective bounds 2row}
In Theorem 6.1
, one can take 
\[
\begin{array}{rcl}
k'_1 &=& |\alpha|+|\beta|+|\gamma| + \beta_1 , \\
k'_2 &=&  |\alpha|+|\beta|+|\gamma| + \gamma_1 , \\
k'_3 &=&  |\alpha|+|\beta|+|\gamma| + \alpha_1+ \beta_1 + \gamma_1.
\end{array}
\]
\end{theorem}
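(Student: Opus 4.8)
The plan is to mimic the structure of the proof of Theorem \ref{bounds-columns}, but adapted to the setting of Theorem 6.1, where instead of tracking columns (conjugate first parts $\alpha'_1$, etc.) we track rows (first parts $\alpha_1$, etc.) and where the relevant generating function is $\polrow = \polrow_{\alpha,\beta,\gamma}$ rather than $\polcol$. From the proof of Theorem 6.1 in \cite{LongVersion} one should be able to take $k'_i = \max_{\omega \in \Omega'} \ell_i(\omega)$, where $\Omega'$ is the support of $\polrow_{\alpha,\beta,\gamma}(x,y,z)$ (or whatever the analogous generating series is). So the whole problem reduces to bounding the exponents appearing in the support of $\polrow$.

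I would then perform the same monomial change of variables $x = vw/u$, $y = uw/v$, $z = uv/w$, so that $x^a y^b z^c = u^{\ell_1}v^{\ell_2}w^{\ell_3}$ with $\ell_1 = b+c-a$, etc., and the three target degrees $k'_1, k'_2, k'_3$ become the degrees of $\polrow$ in $u$, $v$, $w$ respectively. Next I would expand the relevant symmetric-function kernel $\sigma[H^+(\pm\varepsilon x,\pm\varepsilon y,\pm\varepsilon z)]$ (the ``first row'' analogue of $H$; here the plethystic exponent involves the $h$-side rather than the $e$-side), reorder its monomials by degree in $u$, factor $\sigma$ of the sum as a product of $\sigma$'s of the pieces, and expand each factor as a sum of complete or elementary symmetric functions $h_i$ or $e_i$ evaluated at the appropriate alphabets. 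Taking the scalar product against $s_\alpha[X]s_\beta[Y]s_\gamma[Z]$ then turns each factor into a skewing operator $h_i^\perp$ or $e_i^\perp$ applied to one of $s_\alpha$, $s_\beta$, $s_\gamma$, together with leftover factors that must be paired off.

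The degree bound is then extracted, exactly as in the column case, from three kinds of vanishing/degree constraints: (a) $h_i^\perp s_\lambda = 0$ unless $i \le \lambda_1$ and $e_i^\perp s_\lambda = 0$ unless $i \le \lambda'_1$; (b) a skewed Schur function $h_i^\perp s_\lambda$ (resp. $e_i^\perp s_\lambda$) has homogeneous degree $|\lambda| - i$, which bounds how many factors of a given alphabet can survive the pairing; and (c) the homogeneity in $u$, $v$, $w$ coming from the substitution. Summing the individual bounds on the relevant exponents should produce $|\alpha|+|\beta|+|\gamma|+\beta_1$ for the $u$-degree, $|\alpha|+|\beta|+|\gamma|+\gamma_1$ for the $v$-degree, and $|\alpha|+|\beta|+|\gamma|+\alpha_1+\beta_1+\gamma_1$ for the $w$-degree; the asymmetry among the three (only $k'_3$ carries all three first parts) reflects the asymmetric roles of $\alpha$, $\beta$, $\gamma$ in Theorem 6.1, so unlike Theorem \ref{bounds-columns} one cannot simply invoke symmetry and must carry out the $u$-, $v$- and $w$-computations separately.

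The main obstacle I anticipate is precisely this loss of symmetry: I must identify correctly which variable plays which role in the ``first row'' generating function $\polrow$, since a careless reordering of the $\sigma$-factors will give the wrong pairing of skewing operators with $s_\alpha$, $s_\beta$, $s_\gamma$ and hence the wrong first parts in the bounds. Keeping track of the leftover monomial factors (the analogues of the $(v/w)^k(w/v)^\ell$ terms) and checking that their exponents, after combining with the $h_i^\perp$/$e_i^\perp$ constraints, never push the total $u$-, $v$- or $w$-degree above the claimed values is the delicate bookkeeping step; everything else is a routine transcription of the argument already given for Theorem \ref{bounds-columns}.
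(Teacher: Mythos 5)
Your overall strategy---reduce to bounding linear functions of the exponents in the support of $\polrow_{\alpha,\beta,\gamma}$, perform a monomial substitution, factor $\sigma$, skew, and count degrees---is indeed the shape of the paper's argument, but two of your concrete starting choices are wrong, and they are exactly where the content of the theorem lies. First, what the proof of Theorem 6.1 supplies is \emph{not} $k'_i=\max_{\omega}\ell_i(\omega)$: the correct reduction is $k'_1=\max_\omega \ell_1(\omega)$, $k'_2=\max_\omega(\ell_1(\omega)-\ell_2(\omega))$, $k'_3=\max_\omega(\ell_1(\omega)-\ell_3(\omega))$. Consequently the hook-case substitution $x=vw/u$, $y=uw/v$, $z=uv/w$ that you propose to reuse is not the right one here; the proof uses $x=uvw$, $y=vw$, $z=uw$, so that $x^ay^bz^c=u^{\ell_1-\ell_2}v^{\ell_1-\ell_3}w^{\ell_1}$ and the three constants become honest polynomial degrees of $\polrow$ in $u$, $v$, $w$. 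With your substitution the quantities you would be bounding (maxima of $\ell_2$ and $\ell_3$ over the support) are simply not the constants in the statement, and no bookkeeping afterwards can make $\gamma_1$, or $\alpha_1+\beta_1+\gamma_1$, appear in the right places; moreover the kernel relevant to $\polrow$ is $H(x,y,z)$ itself, with no sign twist by $\varepsilon$, so only $h$-expansions occur.

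Second, the step you defer as ``delicate bookkeeping'' is the actual proof. After the correct substitution one has the decomposition $H=u^2vw^2\,Y+uH_1+H_0$, where $H_1$ is free of $u$ and every term of $H_1$ has total degree at least one in $X$, $Y$, $Z$, and $H_0$ has non-positive degree in $u$. Expanding $\sigma[u^2vw^2Y]\,\sigma[uH_1]\,\sigma[H_0]$ and pairing with $s_{\alpha}[X]s_{\beta}[Y]s_{\gamma}[Z]$ produces summands of $u$-degree $2i+j$ in which $h_i^{\perp}s_{\beta}=0$ unless $i\leq\beta_1$, and in which a comparison of \emph{total} degrees forces $j\leq|\alpha|+|\beta|+|\gamma|-i$; this is where $|\alpha|+|\beta|+|\gamma|+\beta_1$ comes from, and the fact that the unique square term $u^2vw^2Y$ involves the alphabet $Y$ is what decides that it is $\beta_1$ (and not $\alpha_1$ or $\gamma_1$) that enters $k'_1$. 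Your outline contains neither this decomposition nor the total-degree comparison (your constraint (b) is the per-alphabet count from the proof of Theorem \ref{bounds-columns}, which is not what is used here), so the stated values of $k'_1$, $k'_2$, $k'_3$ are asserted rather than derived; the same structural analysis must then be repeated for the $v$- and $w$-degrees, which your proposal also leaves unexamined.
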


\begin{proof}
From the proof of Theorem 6.1 
one can take 
\[
\begin{array}{rcl}
k'_1 &=& \max_{\omega \in \Omega} \ell_1(\omega),\\
k'_2 &=&  \max_{\omega \in \Omega} \left( \ell_1(\omega)-\ell_2(\omega)\right), \\
k'_3 &=& \max_{\omega \in \Omega} \left( \ell_1(\omega)-\ell_3(\omega)\right).
\end{array}
\]
Let us perform the change of variables $x = u v w$,  $ y = v w$, $z = u w$, so that $x^a y ^b z^c = u^{\ell_1-\ell_2} v^{\ell_1-\ell_3} w^{\ell_1}$. Then the constants $k'_1$, $k'_2$, $k'_3$ are the degrees of $\polrow_{\alpha, \beta, \gamma}$ in the variables, respectively, $u$, $v$ and $w$.

Let us bound the degree in $u$ of  $\polrow$. 
After the change of variables, we obtain that
$
H(x,y,z)
=
u^2 v w^2 Y + u H_1 + H_0
$
where $H_1$ is free of $u$ and has all its terms of degree at least $1$ in $X$,
$Y$ and $Z$, and $H_0$ has all its terms of degree $\leq 0$ in $u$. 
Thus,
\begin{align*}
\sigma[H]
= \sigma[u^2 v w^2 Y]  \sigma[u H_1]  \sigma[H_0]
= \sum_{i,j} u^{2 i + j} (v w^2)^i h_i[Y] h_j[H_1] \sigma[H_0],
\end{align*}
and, therefore,
\begin{align*}
\polrow{} &= \sum_{i,j} u^{2i+j} (v w^2)^j \scalar{ h_i[Y] h_j[H_1] \sigma[H_0]}{s_{\alpha}[X] s_{\beta}[Y] s_{\gamma}[Z]}\\
&=\sum_{i,j} u^{2i+j} (v w^2)^j \scalar{h_j[H_1] \sigma[H_0]}{s_{\alpha}[X]
(h_i^{\perp }s_{\beta})[Y] s_{\gamma}[Z]}.
\end{align*}
Note that $h_i^{\perp} s_{\beta} = 0$ unless $i \leq \beta_1$. Moreover the left--hand side of each scalar product in the sum is now a sum of homogeneous symmetric functions all of total degree at least  $j$, while the right--hand side has degree $|\alpha|+ |\beta|+ |\gamma| -i$. Thus,  the non-zero summands  fulfill $j \leq |\alpha|+|\beta+|\gamma| -i$.  We conclude that for all non--zero summands, $2i + j \leq |\alpha|+ |\beta|+ |\gamma|+ \beta_1$. 
\end{proof}


\section{Another approach to the hook stability property, derived from Murnaghan's stability and conjugation}\label{app:other}

Here we detail the arguments sketched in Section 5.3. 

\subsection{Hook Stability}

Recall that the Kronecker coefficients are invariant under conjugation of any two of their arguments:
\[
\K{\lambda}{\mu}{\nu}=\K{\lambda'}{\mu'}{\nu}=\K{\lambda'}{\mu}{\nu'}=\K{\lambda}{\mu'}{\nu'}.
\]
Assume that $(\lambda',\mu',\nu)$ is stable. That is, that the value of the Kronecker coefficient $\K{\lambda'}{\mu'}{\nu}$ does not change by adding one to the first parts of the three indexing partitions:
\begin{equation}\label{stab1}
\K{\lambda'}{\mu'}{\nu} = \K{\lambda'\op{1}{0}}{\mu'\op{1}{0}}{\nu\op{1}{0}}.
\end{equation}
Conjugating the partitions in position 1 and 2, we obtain that the Kronecker coefficient
$
\K{\lambda'\op{1}{0}}{\mu'\op{1}{0}}{\nu\op{1}{0}}$ is equal to $\K{\lambda\op{0}{1}}{\mu\op{0}{1}}{\nu\op{1}{0}}.$
We conclude that under our stability assumption
\[
\K{\lambda}{\mu}{\nu} = \K{\lambda\op{0}{1}}{\mu\op{0}{1}}{\nu\op{1}{0}}.
\]

Similarly, under the corresponding stability hypothesis for the triples $(\lambda', \mu, \nu')$ and  $(\lambda, \mu', \nu')$, we have
\[
\K{\lambda}{\mu}{\nu} = \K{\lambda\op{0}{1}}{\mu\op{1}{0}}{\nu\op{0}{1}} \text{ and }
\K{\lambda}{\mu}{\nu} = \K{\lambda\op{1}{0}}{\mu\op{0}{1}}{\nu\op{0}{1}}.
\]

Let $\lambda$, $\mu$ and $\nu$ be three non--empty partitions with the same weight $N$.
A sufficient condition for the stability of the triple $(\lambda, \mu, \nu)$ is
$N \geq N_0(\cut{\lambda}, \cut{\mu}, \cut{\nu}) $ (see (9), 
in Section 3.1). 
As a consequence of Lemma 5.6,
\begin{equation}\label{N0bis}
N \geq N_0(\cutt{\lambda}, \cutt{\mu}, \cutt{\nu}) + \frac{\lambda'_1+\mu'_1+\nu'_1}{2}
\end{equation}
is also a sufficient condition for a triple of non--empty partitions $(\lambda, \mu,\nu)$, all with weight $N$, to be stable. 
Let $\SetL$ be the set of triples of partitions of the same weight that fulfill \eqref{N0bis}. Let $\SetL_4$ be the set of triples of partitions $(\lambda, \mu, \nu)$ such that all four triples
\[
(\lambda, \mu, \nu), (\lambda', \mu', \nu), (\lambda', \mu, \nu'), (\lambda, \mu',\nu')
\]
are in $\SetL$. Then $\SetL_4$ is defined by  inequalities
\begin{equation}\label{SetL4}
\begin{matrix}
N &\geq& N_0(\cutt{\lambda}, \cutt{\mu}, \cutt{\nu}) +
(\lambda_1'+\mu_1'+\nu'_1)/2,\\
N &\geq& N_0(\cutt{\lambda}, \cutt{\mu}, \cutt{\nu}) + (\lambda_1+\mu_1+\nu'_1)/2, \\
N &\geq& N_0(\cutt{\lambda}, \cutt{\mu}, \cutt{\nu}) + (\lambda_1+\mu'_1+\nu_1)/2, \\
N &\geq& N_0(\cutt{\lambda}, \cutt{\mu}, \cutt{\nu}) + (\lambda_1'+\mu_1+\nu_1)/2,
\end{matrix}
\end{equation}
where, again, $N$ is the weight of the partitions $\lambda$, $\mu$ and $\nu$.

Fix partitions $\lambda$, $\mu$, $\nu$ of the same weight $N$.
Let $\Dom$ be the set of all $(a, b, c, m) \in \NN^4$ such that $m \geq a, b, c$
and  $(\lambda \op{m-a}{a}, \mu \op{m-b}{b}, \nu \op{m-c}{c})$ belongs to
$\SetL_4$. 
Then, from \eqref{SetL4}, it is straightforwardly calculated that  $\Dom$ is defined by the system of inequalities:
\[
\begin{matrix}
\ell_i(a,b,c) &\geq  \delta_i & \text{ for all } i\in\{1,2,3\},\\
m -(a+b+c)/2  &\geq  \delta,     & \\
m \geq a, b, c.
\end{matrix}
\]
with 
\[
\begin{matrix}
\delta_1  &=& 2 \; (N_0-N) + \lambda'_1+\mu_1+\nu_1, \\
\delta_2  &=& 2 \; (N_0-N) + \lambda_1+\mu'_1+\nu_1, \\
\delta_3  &=& 2 \; (N_0-N) + \lambda_1+\mu_1+\nu'_1, \\
\delta          &=& N_0 -N  + (\lambda'_1+\mu'_1+\nu'_1)/2,
\end{matrix}
\]
and $N_0 = N_0(\cutt{\lambda}, \cutt{\mu}, \cutt{\nu})$. 
Inequalities \eqref{SetL4} are just the inequalities (21) 
with $\delta_i$ and $\delta$ for $d_i$ and $d$. So, proving that the Kronecker coefficients $\K{\lambda \op{m-a}{a}}{\mu\op{m-b}{b}}{\nu \op{m-c}{c}}$ take only one value for $(a,b,c,m) \in \Dom$, would prove again Theorem 5.6. 
What we will get simply is that these Kronecker coefficients take at most two values.

Note that $\Dom$ is stable under addition of $u_1=(1,1,0,1)$, $u_2=(1,0,1,1)$, $u_3=(1,1,0,1)$ and $u_4=(0,0,0,1)$. The lattice spanned by these vectors is the set of all $(a,b,c,m) \in \ZZ^4$ such that $a+b+c \equiv 0 \pmod{2}$.
Consider $(a,b,c,m)$ and $(a',b',c',m')$ in $\Dom$, such that $a+b+c \equiv a'+b'+c' \pmod{2}$. Their difference is a linear combination with integer coefficients of the vectors $u_i$:
\[
(a,b,c,m) - (a', b', c', m') = \sum_{i=1}^4 x_i u_i.
\]
Set 
\[
(a'',b'',c'',m'')
= 
(a,b,c,m) + \sum_{i: x_i <0} (-x_i) u_i = (a', b', c', m') + \sum_{i: x_i > 0} x_i u_i.
\]
Then 
\[
\K{\lambda \op{m-a}{a}}{\mu\op{m-b}{b}}{\nu \op{m-c}{c}} 
=
\K{\lambda\op{m''-a''}{a''}}{\mu \op{m''-b''}{b''}}{\nu\op{m''-c''}{c''}}
\]
 and 
\[\K{\lambda \op{m'-a'}{a'}}{\mu\op{m'-b'}{b'}}{\nu \op{m'-c'}{c'}}
=
\K{\lambda\op{m''-a''}{a''}}{\mu \op{m''-b''}{b''}}{\nu\op{m''-c''}{c''}}.
\]
This shows that 
\[
\K{\lambda \op{m-a}{a}}{\mu\op{m-b}{b}}{\nu \op{m-c}{c}} = 
\K{\lambda \op{m'-a'}{a'}}{\mu\op{m'-b'}{b'}}{\nu \op{m'-c'}{c'}}.
\]
We conclude that the Kronecker coefficients $\K{\lambda \op{m-a}{a}}{\mu\op{m-b}{b}}{\nu \op{m-c}{c}}$ for $(a,b,c,m) \in \Dom$ take at most two values, one for each value of $a+b+c \pmod{2}$.

To recover fully Theorem 5.6, 
it would now be enough to show that for some $(a,b,c,m) \in \Dom$ such that $a+b+c \equiv 0 \pmod{2}$, we have that the Kronecker coefficients 
\[
\K{\lambda \op{m-a}{a}}{\mu\op{m-b}{b}}{\nu \op{m-c}{c}} 
\]
 and
\[
\K{\lambda \op{m-a+1}{a+1}}{\mu\op{m-b+1}{b+1}}{\nu \op{m-c+1}{c+1}}
\] 
are equal.
This would follow from Conjecture 5.10. 
Indeed, assume Conjecture 5.10 
holds. 
Let $M_0$ (resp. $M_1$) be the value taken by the Kronecker coefficients $\K{\lambda \op{m-a}{a}}{\mu\op{m-b}{b}}{\nu \op{m-c}{c}}$ for $(a,b,c,m) \in \Dom$ with $a+b+c$ even (resp. odd). 
Choose arbitrarily $(a,b,c,m) \in \Dom$ such that $a+b+c$ is even. Set $\alpha=\lambda \op{m-a}{a}$, $\beta =  \mu\op{m-b}{b}$ and $\gamma=\nu\op{n-c}{c}$. 
We have
\[
\K{\alpha}{\beta}{\gamma} 
\leq 
\K{\alpha \op{1}{1}}{\beta\op{1}{1}}{\gamma\op{1}{1}} 
\leq 
\K{\alpha\op{2}{2}}{\beta\op{2}{2}}{\gamma\op{2}{2}}.
\]
But this means $M_0 \leq M_1 \leq M_0$. Therefore $M_0=M_1$. 
 

\subsection{Monotonicity conjecture}

Let us recall the two conjectures of Section 5.3. 

 \begin{conjecture*}[Conjecture 5.10 
 restated]
For any three partitions $\lambda$, $\mu$ and $\nu$ of the same weight, 
\[
\K{\lambda}{\mu}{\nu} \leq \K{\lambda\op{1}{1}}{\mu \op{1}{1}}{\nu\op{1}{1}}.
\]
\end{conjecture*}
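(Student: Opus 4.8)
The plan is to turn the inequality into a positivity statement for the numerator of a one--variable generating function and to extract that numerator with the symmetric--function calculus used in the proof of Theorem 5.2.

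First I would iterate the operation: put $g_m = \K{\lambda\op{m}{m}}{\mu\op{m}{m}}{\nu\op{m}{m}}$, so that the conjecture, quantified over all triples, is equivalent to $g_{m+1}\geq g_m$ for all $m$ (the base case $m=0$ being the conjecture itself). The $\Dom$ analysis of Section 5.3 already applies to this family: the point $(m,m,m,2m)$ lies in $\Dom$ for $m$ large, so $(g_m)_m$ is eventually $2$--periodic and $\sum_{m\geq 0} g_m\, t^m = A(t)/(1-t^2)$ for a polynomial $A(t)$. Hence the conjecture reduces to the non--negativity of the increments $g_{m+1}-g_m$, of which only finitely many are independent for a given triple. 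So it is enough to produce a formula for $g_{m+1}-g_m$ that is visibly $\geq 0$.

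To get one I would replay the computations in the proofs of Theorem 5.2 and of Theorem \ref{bounds-columns}, but tracking a single variable adapted to the staircase direction $(1,1)$ in each tensor factor instead of $x,y,z$: choose the substitution so that the exponent recorded is $m$, factor $\sigma[H(\cdots)]$ exactly as in those proofs, expand everything but $\sigma[H_0]$, and pair against $s_\lambda[X]s_\mu[Y]s_\nu[Z]$. This realises $\sum_m g_m t^m$ as a finite sum of terms of the form $\scalar{F[X,Y,Z]}{(f^\perp s_\lambda)[X]\,(g^\perp s_\mu)[Y]\,(h^\perp s_\nu)[Z]}$ times a power of $t$; the target is to expand the skew Schur functions in the Schur basis and recognise each such scalar product as a non--negative combination of products of Littlewood--Richardson and Kronecker coefficients of strictly smaller partitions. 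Any such identity would prove the conjecture, since those coefficients are non--negative.

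The main obstacle is precisely this recognition step. A scalar product of symmetric functions need not be non--negative, and asserting that a given plethystic expression expands positively in Kronecker coefficients is exactly the kind of ``Kronecker positivity'' statement that tends to be as hard as exhibiting a combinatorial model — which is why this remains a conjecture. A fallback via representation theory, realising $g_m$ as the dimension of a highest--weight space and embedding the $m$--th space into the $(m+1)$--st by multiplication by a fixed covariant, runs aground because the box added to the first \emph{column} of $\lambda\op{1}{1}$ lies in row $\ell(\lambda)+1$, so no single covariant implements $\op{1}{1}$ uniformly in $\lambda$; one is forced to stratify by the number of rows and the difficulty returns. The most promising way forward is probably to establish the inequality first for the reduced Kronecker coefficients, where the extra stability structure is available, and then transfer it down — but the transfer requires controlling the approach to stability, which is again essentially the content of the conjecture.
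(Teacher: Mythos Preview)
This statement is a \emph{conjecture}: the paper does not prove it, and as far as I know it remains open. There is therefore no proof in the paper to compare your attempt against; the surrounding passage only shows that Conjecture~5.10 implies Conjecture~5.11, and that assuming Conjecture~5.10 one can close the parity gap in the alternative approach to Theorem~5.6.

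Your proposal is not a proof either, and you say so yourself. The ``recognition step''---that each scalar product arising from your expansion is a non-negative combination of Kronecker and Littlewood--Richardson coefficients---is precisely the missing ingredient, and you correctly note that asserting such plethystic positivity is typically as hard as the original problem. Everything preceding that step is reformulation, not progress. In particular, the observation that for a fixed triple $(\lambda,\mu,\nu)$ the sequence $g_m$ eventually stabilises (so only finitely many increments $g_{m+1}-g_m$ can be nonzero) does not reduce the conjecture to a finite check: the conjecture is quantified over \emph{all} triples, and the stabilisation threshold grows with the triple. The representation-theoretic fallback you sketch fails for exactly the reason you identify. So what you have written is an honest account of why the conjecture is difficult, not a route to proving it; it should be presented as such rather than as a proof proposal.
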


\begin{conjecture*}[Conjecture 5.11 
restated]
For any three partitions $\lambda$, $\mu$ and $\nu$ of the same weight, and any $(a,b,c,m)$ fulfilling (24), 
\[
\K{\lambda}{\mu}{\nu} \leq \K{\lambda\op{m-a}{a}}{\mu \op{m-b}{b}}{\nu\op{m-c}{c}}.
\]
\end{conjecture*}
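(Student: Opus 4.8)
\medskip

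\noindent\emph{Proof strategy.}
The plan is to sandwich $\K{\lambda}{\mu}{\nu}$ between itself and the value attained on the right, using the reduced (Murnaghan) Kronecker coefficient as a bridge. Write $N=|\lambda|=|\mu|=|\nu|$, so that $\lambda=\cut{\lambda}[N]$ and likewise for $\mu,\nu$. I would establish the chain
\[
\K{\lambda}{\mu}{\nu}\ \le\ \RKO{\cutlmn}\ \le\ g_\infty\ =\ \K{\lambda\op{m-a}{a}}{\mu\op{m-b}{b}}{\nu\op{m-c}{c}},
\]
where $g_\infty$ denotes the common value of the right--hand side over the region (24). The last equality is Theorem~5.6. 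The first inequality is Murnaghan monotonicity: the sequence $n\mapsto\K{\cut{\lambda}[n]}{\cut{\mu}[n]}{\cut{\nu}[n]}$ is weakly increasing for $n\ge N$ (the step at $n=N$ being legitimate because $\lambda_1\ge\lambda_2$, and similarly for $\mu$ and $\nu$) and it converges to $\RKO{\cutlmn}$; this weak monotonicity is known, and can be reproved, if needed, from the nonnegative stable expansion of a Kronecker product of Schur functions in terms of reduced Kronecker coefficients, whose index set grows with $n$.

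The crux is the remaining inequality $\RKO{\cutlmn}\le g_\infty$. When $(0,0,0,m)$ already satisfies (24) for $m\gg0$ --- which is the case as soon as $N$ is large enough --- it is immediate, because $\op{m}{0}$ leaves $\cut{\lambda}$ unchanged, so $g_\infty=\K{\lambda\op{m}{0}}{\mu\op{m}{0}}{\nu\op{m}{0}}=\RKO{\cutlmn}$ for $m\gg0$ by Murnaghan's theorem. In general, though, (24) forces $a,b,c$ to be bounded below; choosing a point $(k,k,k,m)$ of (24) (permissible by Theorem~5.6, since the value is constant there) and pushing $m\to\infty$ gives $g_\infty=\RKO{\cut{\lambda}\op{0}{k},\,\cut{\mu}\op{0}{k},\,\cut{\nu}\op{0}{k}}$ for a suitable $k\ge1$, so what is needed is that lengthening the first column of all three arguments of a reduced Kronecker coefficient by a common amount does not decrease it. This is genuinely delicate: the unrestricted analogue is \emph{false} --- appending one box to the first column of all three arguments of an ordinary Kronecker coefficient is not monotone, e.g.\ $\K{(2,1,1)}{(2,1,1)}{(2,1,1)}=1$ while $\K{(2,1,1,1)}{(2,1,1,1)}{(2,1,1,1)}=0$ --- and this failure persists for reduced coefficients evaluated below their stability threshold.

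Overcoming that is the main obstacle, and here is the route I would try. In $\RKO{\cutlmn}$ the first rows are already in the Murnaghan--stable range, so the first--column additions that produce $g_\infty$, after conjugating two of the three arguments, become \emph{asymmetric} one--box additions (one box to the first row of two arguments, one box to the first column of the third); the hope is that in this range such additions preserve all multiplicities, which one would make precise by constructing explicit injections between the corresponding intertwiner spaces --- equivalently, between highest weight vectors in the model $\mathrm{Sym}^{\bullet}(\CC^{p}\otimes\CC^{q}\otimes\CC^{r})$ of the Kronecker coefficients, the column additions corresponding to enlarging $p,q,r$ and the row additions to multiplication by a fixed $\mathrm{Sym}$ of a rank--one tensor. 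An apparently more elementary approach --- joining $(\lambda,\mu,\nu)$ to a point of (24) by a path of balanced one--box moves each of which is monotone --- collides with exactly the same wall, since among the eight balanced one--box moves the ``all columns'' move and its conjugate are not monotone; it is this same phenomenon that leaves the weaker Conjecture~5.10 open, so I would not expect a proof of the statement that does not also settle~5.10.
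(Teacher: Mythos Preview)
Your proposal rests on a misreading of Theorem~5.6 and of condition~(24), and the resulting strategy cannot reach the conjecture even if the gap you yourself flag were closed. You take $g_\infty$ to be ``the common value of the right-hand side over the region~(24)'' and cite Theorem~5.6 for this constancy; but the right-hand side is \emph{not} constant on~(24). The point $(a,b,c,m)=(0,0,0,0)$ satisfies~(24), and there the right-hand side is $\K{\lambda}{\mu}{\nu}$ itself. Theorem~5.6 asserts constancy only on a translate of the cone~(24) (the inequalities~(21), with thresholds $d_i,d$ depending on $\lambda,\mu,\nu$), not on~(24). The same confusion reappears in your claim that ``(24) forces $a,b,c$ to be bounded below'': it does not, since $(0,0,0,m)$ lies in~(24) for every $m\ge 0$. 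Consequently your chain $\K{\lambda}{\mu}{\nu}\le \RKO{\cutlmn}\le g_\infty=\text{RHS}$, even granting the unproved middle inequality, would establish the statement only for $(a,b,c,m)$ in the stable subregion and say nothing about the remaining points of~(24), which is where the inequality has content.

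The paper takes a completely different route and does not attempt an unconditional proof: it proves the implication \emph{Conjecture~5.10 $\Rightarrow$ Conjecture~5.11}. The argument is structural. The integer points satisfying~(24) form the union of the semigroup $\semigroup$ generated by $(1,1,0,1)$, $(1,0,1,1)$, $(0,1,1,1)$, $(0,0,0,1)$ with its coset $(1,1,1,2)+\semigroup$. For each of the four generators of $\semigroup$ the one-step inequality follows from Murnaghan monotonicity $\K{\lambda}{\mu}{\nu}\le\K{\lambda\op{1}{0}}{\mu\op{1}{0}}{\nu\op{1}{0}}$ after conjugating an appropriate pair of arguments; iterating handles all of $\semigroup$. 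The single extra step $(1,1,1,2)$, i.e.\ applying $\op{1}{1}$ to all three partitions, is exactly Conjecture~5.10, and one application of it reaches the odd-parity coset. Neither Theorem~5.6 nor reduced Kronecker coefficients enter. Your closing remark that a proof of~5.11 should also settle~5.10 is correct for a more elementary reason than you suggest: $(1,1,1,2)$ satisfies~(24), so~5.10 is literally the special case of~5.11 at that point, and the paper's argument supplies the converse implication.
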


Here we prove that  Conjecture 5.10 
implies the more general Conjecture 5.11. 

The proof of this implication is based, once again, on the invariance of the Kronecker coefficients under conjugating two arguments, and on Murnaghan's stability (see Section 3.1
): for any three partitions $\lambda$, $\mu$, $\nu$ of the same weight, 
\begin{equation}\label{ineq 111}
\K{\lambda}{\mu}{\nu} \leq \K{\lambda\op{1}{0}}{\mu \op{1}{0}}{\nu\op{1}{0}}.
\end{equation}

Assume that Conjecture 5.10 
holds. Let $\lambda$, $\mu$ and $\nu$ be three partitions of the same weight. 
We have the identity $\K{\lambda}{\mu}{\nu}= \K{\lambda'}{\mu'}{\nu}$. Using \eqref{ineq 111} we get
$
\K{\lambda'}{\mu'}{\nu} \leq \K{\lambda'\op{1}{0}}{\mu'\op{1}{0}}{\nu\op{1}{0}}.
$
Conjugating again the arguments in position 1 and 2, we have
that $
\K{\lambda'\op{1}{0}}{\mu'\op{1}{0}}{\nu\op{1}{0}}$ is equal to $\K{\lambda\op{0}{1}}{\mu\op{0}{1}}{\nu\op{1}{0}}.
$
Therefore, $
\K{\lambda}{\mu}{\nu} \leq \K{\lambda\op{0}{1}}{\mu\op{0}{1}}{\nu\op{1}{0}}.
$
Likewise
$
\K{\lambda}{\mu}{\nu} \leq \K{\lambda\op{0}{1}}{\mu\op{1}{0}}{\nu\op{0}{1}} $   and $\K{\lambda}{\mu}{\nu} \leq \K{\lambda\op{1}{0}}{\mu\op{0}{1}}{\nu\op{0}{1}}.$

Using these 3 identities, together with \eqref{ineq 111}, we see that $\K{\lambda}{\mu}{\nu} \leq \K{\lambda\op{m-a}{a}}{\mu\op{m-b}{b}}{\nu\op{n-c}{c}}$ for all $(a,b,c,m)$ in the semigroup $\semigroup$ generated by $(1,1,0,1)$, $(1,0,1,1)$, $(0,1,1,1)$ and $(0,0,0,1)$.
This semigroup $\semigroup$ is easily determined: it is the set of points $(a,b,c,m) \in \NN^4$ that fulfill (24) 
and $a+b+c \equiv 0 \pmod{2}$. The set of integer points fulfilling (24) 
splits in two classes: $\semigroup$ in the one hand, and $(1,1,1,2) + \semigroup$ in the other hand.

The implication follows now straightforwardly from this.


\section{The generating function for the coefficients
	$B_{\alpha,\beta,\gamma}$}\label{app:genfunc}

\subsection{Expression involving Schur functions indexed by hooks}

It is proved in Theorem 7.3 
that the Schur generating function for the coefficients  $B_{\alpha,\beta,\gamma}$ is
\[
\sigma[XYZ+2W] \cdot \left( \frac{3}{4}+\frac{1}{4}\sigma[(\varepsilon-1) W]-\frac{1}{2} \chi[W] + \chi[YZ-X] \right)
\]
The following result is stated in Remark 7.4, 
with no proof.
\begin{proposition}
Fix partitions $\alpha$, $\beta$, $\gamma$. 
The coefficient $B_{\alpha,\beta,\gamma}$ in Theorem 6.1 
is the coefficient of $s_{\alpha}[X] s_{\beta}[Y] s_{\gamma}[Z]$ in the expansion in the Schur basis of
\[
\sigma[XYZ+2W]  \cdot \left(1 - \sum_{a \text{even }, b} (-1)^b s_{(a|b)}[W] + \sum_{a, b} (-1)^b s_{(a|b)}[YZ-X] \right).
\]
\end{proposition}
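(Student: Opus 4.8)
The plan is to cancel the common factor $\sigma[XYZ+2W]$ appearing in both expressions and then to read off the equality of the two remaining ``correction factors'' from the classical Frobenius (Murnaghan--Nakayama) expansion of power sums and the Giambelli formula for hooks. Concretely, after the cancellation it suffices to prove
\[
\frac34+\frac14\,\sigma[(\varepsilon-1)W]-\frac12\,\chi[W]+\chi[YZ-X]
\;=\;
1-\sum_{a\ \mathrm{even},\ b}(-1)^b s_{(a|b)}[W]+\sum_{a,b}(-1)^b s_{(a|b)}[YZ-X].
\]
The two right--hand terms are dealt with at once: the Frobenius formula $p_n=\sum_{a+b=n-1}(-1)^b s_{(a|b)}$, summed over $n\ge1$, gives $\chi[A]=\sum_{n\ge1}p_n[A]=\sum_{a,b\ge0}(-1)^b s_{(a|b)}[A]$ for every (virtual) alphabet $A$; this identifies $\chi[YZ-X]$ with its counterpart and replaces $\chi[W]$ by $\sum_{a,b}(-1)^b s_{(a|b)}[W]$.

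After these substitutions, and using $\sum_{a\ \mathrm{even}}=\tfrac12\sum_{a}+\tfrac12\sum_{a}(-1)^a$, the claimed equality reduces to the single plethystic identity
\[
\sum_{a,b\ge0}(-1)^{a+b}s_{(a|b)}[W]=\frac12\Bigl(1-\sigma[(\varepsilon-1)W]\Bigr).
\]
To establish it I would insert the Giambelli expansion $s_{(a|b)}=\sum_{i=0}^{b}(-1)^i h_{a+1+i}e_{b-i}$, re-index the triple sum by $k=b-i$, and sum the resulting geometric series, obtaining the generating function
\[
\sum_{a,b\ge0}s_{(a|b)}[W]\,u^a v^b=\frac{\sigma_u[W]-\sigma_{-v}[W]}{u+v}\;\lambda_v[W],
\qquad
\sigma_t[W]:=\sum_n h_n[W]t^n,\quad \lambda_t[W]:=\sum_n e_n[W]t^n .
\]
Specializing at $u=v=-1$ produces $\tfrac12\bigl(1-(\sigma[W]\,\lambda[W])^{-1}\bigr)$, and since $(\sigma[W]\,\lambda[W])^{-1}=\sigma[-W]\,\sigma[\varepsilon W]=\sigma[(\varepsilon-1)W]$ in the $\varepsilon$--notation fixed in Section~7, this is exactly the right--hand side above. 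Combining this with the previous paragraph proves the proposition.

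I do not expect a genuine obstacle: the whole argument is bookkeeping built from the Frobenius and Giambelli formulas together with the conjugation/$\varepsilon$ conventions already in use. The two steps that require a little care are (i) carrying out the resummation of $\sum_{a,b}s_{(a|b)}[W]u^av^b$ correctly, noting that $\tfrac{\sigma_u[W]-\sigma_{-v}[W]}{u+v}=\sum_{m\ge1}h_m[W]\sum_{i=0}^{m-1}u^{m-1-i}(-v)^i$ is a series whose coefficients are polynomials in $u,v$, so that the substitution $u=v=-1$ is performed degree by degree without convergence issues; and (ii) the identification $(\sigma[W]\,\lambda[W])^{-1}=\sigma[(\varepsilon-1)W]$, i.e.\ matching the value of the specialized generating function against the $\varepsilon$--expression occurring in Theorem~7.3. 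One should also check that the range ``$a$ even'' in the statement includes $a=0$, which is what legitimizes replacing $\sum_{a\ \mathrm{even}}$ by $\tfrac12\sum_a\bigl(1+(-1)^a\bigr)$.
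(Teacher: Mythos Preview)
Your argument is correct. The reduction to the single identity
\[
\sum_{a,b\ge 0}(-1)^{a+b}s_{(a|b)}[W]=\tfrac12\bigl(1-\sigma[(\varepsilon-1)W]\bigr)
\]
is exactly right, your generating--function computation via the hook Jacobi--Trudi expansion checks out term by term, and the identification $(\sigma[W]\lambda[W])^{-1}=\sigma[-W]\sigma[\varepsilon W]=\sigma[(\varepsilon-1)W]$ is consistent with the $\varepsilon$--convention used in Section~7 (namely $p_n[\varepsilon]=(-1)^n$, so that $\sigma[\varepsilon W]=\lambda[W]^{-1}$).

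The paper's own proof reaches the same key identity by a different and shorter route: it applies Cauchy's formula to write $\sigma[(\varepsilon-1)W]=\sum_\lambda s_\lambda[1-\varepsilon]\,(-1)^{|\lambda|}s_{\lambda'}[W]$ and then invokes the known specialization (Stanley, Ex.~7.43) that $s_\lambda[1-\varepsilon]$ equals $1$ for $\lambda=\emptyset$, equals $2$ for hooks, and vanishes otherwise. This immediately yields $\sigma[(\varepsilon-1)W]=1-2\sum_{a,b}(-1)^{a+b}s_{(a|b)}[W]$, and the rest of the argument (the treatment of $\chi$ via $p_n=\sum_{a+b=n-1}(-1)^b s_{(a|b)}$) is identical to yours. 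So the two proofs coincide except at this one step: the paper quotes an off--the--shelf super--Schur specialization, while you give a self--contained derivation from the Giambelli/Jacobi--Trudi formula and a two--variable generating function. Your version is longer but has the virtue of not relying on an external reference.
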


\begin{proof}
From Cauchy's Formula,
\begin{multline}
\sigma[(\varepsilon-1) W] = 
\sigma[(1-\varepsilon) (-W)] = \\
\sum_{\lambda} s_{\lambda}[1-\varepsilon] s_{\lambda}[-W]=
\sum_{\lambda} s_{\lambda}[1-\varepsilon] (-1)^{|\lambda|}s_{\lambda'}[W].
\end{multline}
From \cite[Ex. 7.43 with $t=1$]{Stanley}, $s_{\lambda}[1-\varepsilon]$ is $1$ if $\lambda$ is the empty partition, $2$ if $\lambda$ is a hook and $0$ otherwise. Therefore,
\[
\sigma[(\varepsilon-1) W] = 1+2\;\sum_{a, b \geq 0} (-1)^{1+a+b} s_{(a|b)}[W].
\]
Thus,
\[
\frac{3}{4}+\frac{1}{4}\sigma[(\varepsilon-1) W] = 1 + \frac{1}{2} \sum_{a, b} (-1)^{1+a+b} s_{(a|b)}[W].
\]
From \cite[I.\S 3. Ex. 11 (2) with $\mu=\ep$]{Macdonald}, we have 
\begin{equation}\label{chi}
\chi=\sum_{a, b} (-1)^{b} s_{(a|b)},
\end{equation}
where $\chi$ is the sum of the power sum symmetric functions as defined in
Proposition 7.3. 
Therefore,
\begin{multline*}
\frac{3}{4}+\frac{1}{4}\sigma[(\varepsilon-1) W] -\frac{1}{2} \chi[W] 
=\\
1+ \frac{1}{2} \sum_{a, b} (-1)^{1+a+b} s_{(a|b)}[W] -\frac{1}{2}\sum_{a, b} (-1)^{b} s_{(a|b)}\\
=
1 -  \sum_{a \text{ even}, b} (-1)^b s_{(a|b)}[W].
\end{multline*}
Using again \eqref{chi} to rewrite $\chi[YZ-X]$, we get the following formula for the generating function of the coefficients of $B$:
\[
\sigma[XYZ+2W]  \cdot \left(1 - \sum_{a \text{even }, b} (-1)^b s_{(a|b)}[W] +
\sum_{a, b} (-1)^b s_{(a|b)}[YZ-X] \right).
 \]
\end{proof}

\subsection{Toolbox for other expressions}

In order to write in other ways the generating function for the coefficients $B_{\alpha,\beta, \gamma}$, the following formulas may be useful:
\begin{align*}
\sigma[X] \cdot \chi[X] &= \sum_{k} k \, h_k[X],\\  
\sigma[2X] \cdot \chi[X] &= \sum_{\lambda: \ell(\lambda) \leq 2} \frac{(\lambda_1-\lambda_2+1)(\lambda_1+\lambda_2)}{2} \, s_{\lambda}[X].
\end{align*}
They follow from the fact that $\sigma[t X] \chi[tX]$ is the derivative of
$\sigma[tX]$ (for the first one), and that $\sigma[2 t X] \chi[ 2 tX]$ is the
derivative of $\sigma[2 tX]$. Last, by Cauchy's formula,
\[
\sigma[2 t X]= \sum_{\lambda} s_{\lambda}[2] s_{\lambda}[X] t^{|\lambda|}, 
\]
and $s_{\lambda}[2] = (\lambda_1-\lambda_2+1)$ if $\lambda$ has at most two parts, and is equal to $0$ otherwise.

\section{Table of coefficients}\label{apptab}

Tables \ref{part1} and \ref{part2} display the coefficients $\SSK{\alpha}{\beta}{\gamma}$, $A_{\alpha,\beta,\gamma}$, $ B_{\alpha,\beta,\gamma}$ and $C_{\alpha,\beta,\gamma}$ for all partitions $\alpha$, $\beta$ and $\gamma$ with weight at most $3$. Note that $\SSK{\alpha}{\beta}{\gamma}$, $A_{\alpha,\beta,\gamma}$ and $C_{\alpha,\beta,\gamma}$ are invariant under permutation of their three indices. This is why the table gives their values only for $\alpha \geq \beta \geq \gamma$, where the order $\geq$ is the degree lexicographic ordering. The coefficients $B_{\alpha,\beta,\gamma}$ is only invariant under permuting its last two indices. 

These coefficients where calculated by series expansion of the generating series and using SAGE \cite{SAGE}.

\begin{table}
\[
\begin{array}{ccccccccc}
\alpha & \beta & \gamma & \SSK{\alpha}{\beta}{\gamma} & A_{\alpha,\beta,\gamma} & B_{\alpha,\beta,\gamma} & B_{\beta,\alpha,\gamma} & B_{\gamma,\alpha,\beta} & C_{\alpha,\beta,\gamma} \\\hline
\ep       &\ep       &\ep       &1    &1    &1   
&1    &1    &1     \\
(1)      &\ep       &\ep       &2    &2    &   
0&1    &1    &    0 \\
(1)      &(1)      &\ep       &6    &6    &    0&
0&3    &    0 \\
(1)      &(1)      &(1)      &21   &21   &    0&
0&    0&1     \\
(2)      &\ep       &\ep       &2    &3    &-2  
&1    &1    &1     \\
(2)      &(1)      &\ep       &8    &10   &-7  
&-2   &3    &    0 \\
(2)      &(1)      &(1)      &34   &40   &-25 
&-5   &-5   &    0 \\
(2)      &(2)      &\ep       &14   &20   &-14 
&-14  &6    &2     \\
(2)      &(2)      &(1)      &66   &86   &-57 
&-57  &-14  &    0 \\
(2)      &(2)      &(2)      &145  &203  &-133
&-133 &-133 &5     \\
(2)      &(2)      &(1, 1)   &144  &150  &-84 
&-84  &-84  &-4    \\
(2)      &(2)      &(1, 1, 1)&204  &134  &-54 
&-54  &-121 &    0 \\
(2)      &(1, 1)   &\ep       &14   &12   &-8  
&-8   &4    &-2    \\
(2)      &(1, 1)   &(1)      &66   &62   &-33 
&-33  &-2   &    0 \\
(2)      &(1, 1)   &(1, 1)   &145  &131  &-55 
&-55  &-55  &5     \\
(2)      &(1, 1, 1)&\ep       &16   &6    &-3  
&-6   &3    &    0 \\
(2)      &(1, 1, 1)&(1)      &84   &46   &-19 
&-42  &4    &    0 \\
(2)      &(1, 1, 1)&(1, 1)   &206  &144  &-45 
&-117 &-45  &    0 \\
(2)      &(1, 1, 1)&(1, 1, 1)&326  &240  &-48 
&-168 &-168 &    0 \\
(1, 1)   &\ep       &\ep       &2    &1    &-1   &
0&    0&-1    \\
(1, 1)   &(1)      &\ep       &8    &6    &-3   &
0&3    &    0 \\
(1, 1)   &(1)      &(1)      &34   &28   &-13 
&1    &1    &    0 \\
(1, 1)   &(1, 1)   &\ep       &14   &12   &-4  
&-4   &8    &2     \\
(1, 1)   &(1, 1)   &(1)      &66   &54   &-21 
&-21  &6    &    0 \\
(1, 1)   &(1, 1)   &(1, 1)   &144  &110  &-38 
&-38  &-38  &-4    \\
(3)      &\ep       &\ep       &2    &4    &-6   &
0&    0&    0 \\
(3)      &(1)      &\ep       &8    &14   &-20 
&-6   &1    &    0 \\
(3)      &(1)      &(1)      &38   &59   &-78 
&-19  &-19  &1     \\
(3)      &(2)      &\ep       &16   &30   &-42 
&-27  &3    &    0 \\
(3)      &(2)      &(1)      &84   &138  &-178
&-109 &-40  &    0 \\
(3)      &(2)      &(2)      &206  &348  &-435
&-261 &-261 &    0 \\
(3)      &(2)      &(1, 1)   &204  &258  &-299
&-170 &-170 &    0 \\
(3)      &(2)      &(1, 1, 1)&320  &250  &-250
&-125 &-250 &    0 \\
(3)      &(1, 1)   &\ep       &16   &18   &-24 
&-15  &3    &    0 \\
(3)      &(1, 1)   &(1)      &84   &98   &-118
&-69  &-20  &    0 \\
(3)      &(1, 1)   &(1, 1)   &206  &220  &-235
&-125 &-125 &    0 \\
(3)      &(3)      &\ep       &22   &50   &-72 
&-72  &3    &    0 \\
(3)      &(3)      &(1)      &122  &240  &-321
&-321 &-81  &2     \\
(3)      &(3)      &(2)      &326  &640  &-820
&-820 &-500 &    0 \\
(3)      &(3)      &(1, 1)   &320  &478  &-574
&-574 &-335 &    0 \\
(3)      &(3)      &(3)      &565  &1243
&-1597&-1597&-1597&5     \\
(3)      &(3)      &(2, 1)   &1056 &1632
&-1888&-1888&-1888&    0 \\
(3)      &(3)      &(1, 1, 1)&544  &506  &-521
&-521 &-521 &-4   
\end{array}
\]
\caption{Table of the coefficients of the paper, for three indexing partitions with weight at most $3$ (part 1 of 2). }\label{part1}
\end{table}

\begin{table}
\[
\begin{array}{ccccccccc}
\alpha & \beta & \gamma & \SSK{\alpha}{\beta}{\gamma} & A_{\alpha,\beta,\gamma} & B_{\alpha,\beta,\gamma} & B_{\beta,\alpha,\gamma} & B_{\gamma,\alpha,\beta} & C_{\alpha,\beta,\gamma} \\\hline
(3)      &(2, 1)   &\ep       &38   &50   &-66 
&-66  &9    &    0 \\
(3)      &(2, 1)   &(1)      &224  &288  &-344
&-344 &-56  &    0 \\
(3)      &(2, 1)   &(2)      &610  &824  &-938
&-938 &-526 &    0 \\
(3)      &(2, 1)   &(1, 1)   &610  &700  &-738
&-738 &-388 &    0 \\
(3)      &(2, 1)   &(2, 1)   &2037 &2465
&-2515&-2515&-2515&1     \\
(3)      &(2, 1)   &(1, 1, 1)&1056 &928  &-832
&-832 &-832 &    0 \\
(3)      &(1, 1, 1)&\ep       &22   &10   &-12 
&-12  &3    &    0 \\
(3)      &(1, 1, 1)&(1)      &122  &80   &-85 
&-85  &-5   &-2    \\
(3)      &(1, 1, 1)&(1, 1)   &326  &260  &-240
&-240 &-110 &    0 \\
(3)      &(1, 1, 1)&(1, 1, 1)&565  &451  &-355
&-355 &-355 &5     \\
(2, 1)   &\ep       &\ep       &2    &2    &-3   &
0&    0&    0 \\
(2, 1)   &(1)      &\ep       &12   &12   &-15 
&-3   &3    &    0 \\
(2, 1)   &(1)      &(1)      &64   &64   &-72 
&-8   &-8   &    0 \\
(2, 1)   &(2)      &\ep       &28   &30   &-36 
&-21  &9    &    0 \\
(2, 1)   &(2)      &(1)      &152  &164  &-181
&-99  &-17  &    0 \\
(2, 1)   &(2)      &(2)      &382  &442  &-477
&-256 &-256 &    0 \\
(2, 1)   &(2)      &(1, 1)   &382  &378  &-371
&-182 &-182 &    0 \\
(2, 1)   &(2)      &(1, 1, 1)&610  &472  &-394
&-158 &-394 &    0 \\
(2, 1)   &(1, 1)   &\ep       &28   &26   &-28 
&-15  &11   &    0 \\
(2, 1)   &(1, 1)   &(1)      &152  &140  &-139
&-69  &1    &    0 \\
(2, 1)   &(1, 1)   &(1, 1)   &382  &330  &-293
&-128 &-128 &    0 \\
(2, 1)   &(2, 1)   &\ep       &74   &74   &-81 
&-81  &30   &    0 \\
(2, 1)   &(2, 1)   &(1)      &428  &428  &-433
&-433 &-5   &    0 \\
(2, 1)   &(2, 1)   &(2)      &1168 &1242
&-1218&-1218&-597 &    0 \\
(2, 1)   &(2, 1)   &(1, 1)   &1168 &1094 &-982
&-982 &-435 &    0 \\
(2, 1)   &(2, 1)   &(2, 1)   &3933 &3933
&-3470&-3470&-3470&1     \\
(2, 1)   &(2, 1)   &(1, 1, 1)&2037 &1609
&-1221&-1221&-1221&1     \\
(2, 1)   &(1, 1, 1)&\ep       &38   &26   &-24 
&-24  &15   &    0 \\
(2, 1)   &(1, 1, 1)&(1)      &224  &160  &-136
&-136 &24   &    0 \\
(2, 1)   &(1, 1, 1)&(1, 1)   &610  &444  &-338
&-338 &-116 &    0 \\
(2, 1)   &(1, 1, 1)&(1, 1, 1)&1056 &736  &-480
&-480 &-480 &    0 \\
(1, 1, 1)&\ep       &\ep       &2    &    0&    0&
0&    0&    0 \\
(1, 1, 1)&(1)      &\ep       &8    &2    &-2   &
0&1    &    0 \\
(1, 1, 1)&(1)      &(1)      &38   &17   &-15 
&2    &2    &-1    \\
(1, 1, 1)&(1, 1)   &\ep       &16   &10   &-8  
&-3   &7    &    0 \\
(1, 1, 1)&(1, 1)   &(1)      &84   &54   &-42 
&-15  &12   &    0 \\
(1, 1, 1)&(1, 1)   &(1, 1)   &204  &134  &-97 
&-30  &-30  &    0 \\
(1, 1, 1)&(1, 1, 1)&\ep       &22   &18   &-12 
&-12  &15   &    0 \\
(1, 1, 1)&(1, 1, 1)&(1)      &122  &88   &-59 
&-59  &29   &2     \\
(1, 1, 1)&(1, 1, 1)&(1, 1)   &320  &206  &-130
&-130 &-27  &    0 \\
(1, 1, 1)&(1, 1, 1)&(1, 1, 1)&544  &322  &-175
&-175 &-175 &-4
\end{array}
\]
\caption{Table of the coefficients of the paper, for three indexing partitions with weight at most $3$ (part 2 of 2).}
\label{part2}
\end{table}


\bibliographystyle{hplain}
\bibliography{kroproduct}

\end{document}